    \newtheorem{thm}{Theorem}[section]
    \newtheorem{lem}[thm]{Lemma}
    \newtheorem{cor}[thm]{Corollary}
    \theoremstyle{definition}
    \newtheorem{defn}[thm]{Definition}
    \theoremstyle{remark}
\newcommand{\executeiffilenewer}[3]{%
\ifnum\pdfstrcmp{\pdffilemoddate{#1}}%
{\pdffilemoddate{#2}}>0%
{\immediate\write18{#3}}\fi%
}
\newcommand{%
\executeiffilenewer{.svg}{.pdf}%
{inkscape -z -D --file=.svg %
--export-pdf=.pdf --export-latex}%
\input{.pdf_tex}%
}[1]{%
\executeiffilenewer{#1.svg}{#1.pdf}%
{inkscape -z -D --file=#1.svg %
--export-pdf=#1.pdf --export-latex}%
\input{#1.pdf_tex}%
}
\newcommand{\fakeenv}{}
\newenvironment{restate}[2]                                    
{ 
 \renewcommand{\fakeenv}{#2}                              
 \theoremstyle{plain} 
 \newtheorem*{\fakeenv}{#1~\ref{#2}}                
 \begin{\fakeenv}
}
{
 \end{\fakeenv}
}
\title{A short proof of the bounded geodesic image theorem}
\author{Richard C. H. Webb}
\address{Mathematics Institute, University of Warwick, Coventry, CV4 7AL, United Kingdom.}
\email{R.C.H.Webb@warwick.ac.uk}
\begin{document}

\begin{abstract}

We give a combinatorial proof, using the hyperbolicity of the curve graphs, of the bounded geodesic image theorem of Masur--Minsky. Recently it has been shown that curve graphs are uniformly hyperbolic, thus a universal bound can be given for the diameter of the geodesic image. We also generalize the theorem for projections to markings of the whole surface.

\end{abstract}

\maketitle

\section{Introduction}

We write $S_{g,p}$ to denote the genus $g$ surface with $p$ points removed and $\xi(S)=3g-3+p$ to denote the \textit{complexity} of $S=S_{g,p}$. We say a simple closed curve on $S$ is \textit{essential} if it does not bound a disc or once-punctured disc. In general, we say that an isotopy class of some subset of $S$ \textit{misses} another isotopy class of some subset if they admit disjoint representatives, and otherwise we say that they \textit{cut}. A \textit{curve} is an isotopy class of essential simple closed curve. We write $\mathcal{C}(S)$ to denote the \textit{curve graph} of $S$, whose vertex set is the set of curves on $S$ with edges between non-equal curves that miss; this is the 1-skeleton of the \textit{curve complex} which was introduced by Harvey \cite{Harvey}. Throughout, $S=S_{g,p}$ with $\xi(S)\geq 2$. For the surfaces $S_{0,4}$ and $S_{1,2}$, one can use the Farey graph, the description of its geodesics and a lifting argument to prove Theorem \ref{bgit}.

We shall abuse notation by simply writing $\gamma$ to mean both the simple closed curve $\gamma$ and its isotopy class. We write $d_S$ to denote the path metric on $\mathcal{C}(S)$ with unit length edges. A sequence of curves $g=(\gamma_i)$ is a \textit{geodesic} if for all $i\neq j$, we have $d_S(\gamma_i,\gamma_j)=|i-j|$. We say $\mathcal{C}(S)$ is $\delta$-hyperbolic if for all geodesic triangles $g_1,g_2,g_3$, we have $g_1\subset N_\delta(g_2\cup g_3)$, where $N_\delta$ is the metric closed $\delta$-neighbourhood.

\begin{thm}[\cite{MasurMinsky99}]
\label{thm:hyp}

Fix $S=S_{g,p}$ with $\xi(S)\geq 2$. There exists $\delta\geq 0$ such that $\mathcal{C}(S)$ is $\delta$-hyperbolic.

\end{thm}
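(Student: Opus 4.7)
My approach is to verify Gromov's thin-triangles condition by constructing, for every pair of distinct curves, a preferred family of paths and proving that triangles of such paths are uniformly slim; a standard criterion then delivers hyperbolicity, bypassing the Teichm\"uller-theoretic machinery of the original proof. Given $\alpha\neq\beta$ realised in minimal position, I would pick oriented embedded arcs $a\subset\alpha$ and $b\subset\beta$. For each intersection point $\pi\in a\cap b$, the concatenation (after a small smoothing at $\pi$) of the initial subarc of $a$ up to $\pi$ with the terminal subarc of $b$ from $\pi$ is a simple arc $c_\pi$, and a suitable boundary component of a regular neighbourhood of $c_\pi\cup\alpha\cup\beta$ is an essential curve $\gamma_\pi$ on $S$. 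Ordering the intersections along $a$ produces a sequence $\alpha=\gamma_0,\gamma_1,\ldots,\gamma_n=\beta$, and I would verify that consecutive terms are disjoint, so this is a genuine path $P(\alpha,\beta)$ in $\mathcal{C}(S)$; such paths are sometimes called \emph{unicorn} or \emph{surgery paths}.

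The heart of the argument is to establish, for some constant $K=K(S)$, the slim-triangle property: for every $\alpha,\beta,\gamma$ and every $\gamma_\pi\in P(\alpha,\beta)$, the curve $\gamma_\pi$ lies within $d_S$-distance $K$ of $P(\alpha,\gamma)\cup P(\gamma,\beta)$. The plan is to realise $\gamma$ in minimal position with $a$ and $b$ and, using the first essential intersection of $\gamma$ with $c_\pi$, perform a parallel surgery producing a curve on one of the two adjacent unicorn paths at controlled distance from $\gamma_\pi$. With slim triangles in hand, I would appeal to a criterion of Masur--Minsky (subsequently refined by Hamenst\"adt and Bowditch): if a graph admits a family of preferred paths between every two vertices whose triangles are uniformly slim, then the graph is Gromov-hyperbolic and the paths are uniform quasi-geodesics, with $\delta$ controlled by the slimness constant.

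The main obstacle is the slim-triangle verification. The combinatorial case analysis must track how $\gamma$ meets the arcs $c_\pi$, and must handle several degenerate configurations---when a surgery arc becomes inessential or peripheral, when two of $\alpha,\beta,\gamma$ coincide, or when the chosen representatives fail to be in pairwise minimal position after one of the surgeries. These are precisely the places where the complexity hypothesis $\xi(S)\geq 2$ is used, since on lower-complexity surfaces the surgery produces boundary-parallel arcs that must be treated by the Farey-graph argument mentioned in the introduction. Once the slim-triangle constant $K$ is extracted, $\delta$ follows immediately from the criterion.
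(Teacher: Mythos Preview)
The paper does not prove Theorem~\ref{thm:hyp}; it is quoted from \cite{MasurMinsky99} and used as a black box. There is therefore no proof in the paper to compare your proposal against.

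That said, your sketch is recognisably the unicorn-path strategy of Hensel--Przytycki--Webb (which the paper in fact cites for the \emph{uniform} bound on $\delta$), but the technical setup is garbled in a way that matters. Unicorn paths are constructed in the \emph{arc} complex: $a$ and $b$ are genuine arcs with endpoints on $\partial S$ or at punctures, the unicorn $c_\pi$ is itself the surgered arc, and hyperbolicity of $\mathcal{C}(S)$ is then deduced via the quasi-isometry with the arc-and-curve complex. Your description instead starts with closed curves $\alpha,\beta$ and asks for ``oriented embedded arcs $a\subset\alpha$, $b\subset\beta$'', which has no evident meaning for a simple closed curve; and your $\gamma_\pi$, taken as a boundary component of a neighbourhood of $c_\pi\cup\alpha\cup\beta$, is a different construction altogether (closer in spirit to the bicorn curves of Przytycki--Sisto). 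As written, you have not specified a well-defined family of paths in $\mathcal{C}(S)$, so the slim-triangle verification---which you correctly identify as the heart of the matter---cannot begin. If you want to carry this out, either work in the arc complex first and transfer the result, or switch explicitly to bicorns; mixing the two will not produce consecutive-disjointness.
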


We write \textit{subsurface} to denote a compact, connected, proper subsurface of $S$ such that each component of its boundary is essential in $S$. Throughout, we do not consider subsurfaces that are homotopy equivalent to $S_{0,3}$; in this case Theorem \ref{bgit} is straightforward.

For a non-annular subsurface $Y\subset S$. We write $\partial Y$ for the boundary of $Y$. We now define a map $\pi_Y : \mathcal{C}_0(S)\rightarrow \mathcal{P}(\mathcal{AC}_0(Y))$, where $\mathcal{AC}(Y)$ is the \textit{arc and curve complex} of $Y$, and generally $\mathcal{P}(X)$ is the set of subsets of $X$. Given a curve $\gamma\in\mathcal{C}(S)$, isotope $\gamma$ so that it intersects $Y$ minimally. We define $\pi_Y(\gamma)$ to be the arcs and/or curves $\gamma\cap Y\subset Y$. This is non-empty if and only if $\gamma$ cuts $Y$. The map $\pi_Y$ is the \textit{subsurface projection} to the arc and curve complex of $Y$. We write $\pi_Y(A)=\cup_{\gamma \in A}\pi_Y(\gamma)$.

When $Y$ is an annulus we write $\partial Y$ for the core curve of $Y$. This core curve represents a subgroup of $\pi_1(S)$ and therefore there is an associated cover $p_Y:S_Y\rightarrow S$, where $S_Y$ is homeomorphic to the interior of an annulus. There is a homeomorphic lift of $Y$ to $S_Y$ which we write $Y'$. One can compactify $S_Y$ to a closed annulus by using a hyperbolic metric on $S$. Let $\mathcal{AC}_0(Y)$ be the set of arcs that connect one boundary component of $S_Y$ to the other, modulo isotopies that fix the endpoints. Two arcs are adjacent if they admit disjoint representatives. We write $\mathcal{AC}(Y)$ to denote this graph. Given a curve $\gamma$ that cuts $Y$, we define $\pi_Y(\gamma)$ to be the set of arcs of the preimage $\tilde{\gamma}=p^{-1}_Y\gamma$ that connect the two boundary components of $S_Y$. Otherwise, $\pi_Y(\gamma)=\emptyset$. This defines the subsurface projection $\pi_Y:\mathcal{C}_0(S)\rightarrow\mathcal{P}(\mathcal{AC}_0(Y))$ when $Y$ is an annulus.

We write $d_{\mathcal{AC}(Y)}$ to denote the standard metric on the graph $\mathcal{AC}(Y)$. We write $d_Y(A)=\textnormal{diam}_{\mathcal{AC}(Y)}(\pi_Y A)$ and $d_Y(A,B)=\textnormal{diam}_{\mathcal{AC}(Y)}(\pi_Y(A)\cup \pi_Y(B))$. The following lemma is immediate, see also \cite[Lemma 2.2]{MasurMinsky00}.

\begin{lem}
\label{bp}

Let $Y$ be a subsurface of $S$ and let $\gamma_1,\gamma_2$ be curves on $S$. Suppose that $\gamma_1$ cuts $Y$, $\gamma_2$ cuts $Y$ and $\gamma_1$ misses $\gamma_2$. Then $d_Y(\gamma_1,\gamma_2)\leq 1$. \hfill $\square$

\end{lem}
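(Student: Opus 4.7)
The plan is to pick representatives of $\gamma_1,\gamma_2$ that are simultaneously disjoint and intersect $\partial Y$ (or $Y$) minimally, and then observe that this forces the arcs comprising $\pi_Y(\gamma_1)$ and $\pi_Y(\gamma_2)$ to be pairwise disjoint. The existence of such representatives is standard (e.g.\ take the geodesic representatives with respect to a hyperbolic metric on $S$, which simultaneously minimize self- and pairwise intersection numbers, including with $\partial Y$).

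I would split into the two cases of the definition. First suppose $Y$ is non-annular. After choosing disjoint representatives meeting $\partial Y$ minimally, the set $\pi_Y(\gamma_1)=\gamma_1\cap Y$ is a disjoint union of arcs and curves, and likewise for $\pi_Y(\gamma_2)$; moreover, because $\gamma_1$ and $\gamma_2$ are realised disjointly on $S$, every component of $\gamma_1\cap Y$ is disjoint from every component of $\gamma_2\cap Y$. Any two elements of $\pi_Y(\gamma_1)\cup\pi_Y(\gamma_2)$ are therefore either equal or at distance $1$ in $\mathcal{AC}(Y)$, so $\textnormal{diam}_{\mathcal{AC}(Y)}\bigl(\pi_Y(\gamma_1)\cup\pi_Y(\gamma_2)\bigr)\leq 1$.

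Now suppose $Y$ is an annulus, so $\pi_Y$ is defined via the cover $p_Y:S_Y\to S$. Since $\gamma_1$ and $\gamma_2$ are simple and admit disjoint representatives, the preimages $p_Y^{-1}(\gamma_1)$ and $p_Y^{-1}(\gamma_2)$ are disjoint embedded $1$-submanifolds of $S_Y$ (covering maps are local homeomorphisms on the target). In particular the components of $p_Y^{-1}(\gamma_i)$ that connect the two boundary circles of the compactified annulus $\overline{S_Y}$ are pairwise disjoint within each $\pi_Y(\gamma_i)$, and every such arc from $\pi_Y(\gamma_1)$ is disjoint from every such arc in $\pi_Y(\gamma_2)$. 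Again this gives $d_Y(\gamma_1,\gamma_2)\leq 1$.

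There is no real obstacle; the only point needing a touch of care is the simultaneous minimal-position claim, which is a routine hyperbolic-geodesic (or bigon-criterion) argument. Everything else is then a direct reading of the definitions of $\pi_Y$ and of adjacency in $\mathcal{AC}(Y)$.
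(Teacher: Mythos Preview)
Your proof is correct and is exactly the routine verification the paper has in mind: the lemma is stated without proof (the \hfill $\square$ indicates it is taken as immediate, with a pointer to \cite[Lemma 2.2]{MasurMinsky00}), and your argument---realise the curves disjointly and in minimal position, then read off disjointness of the projections in both the non-annular and annular cases---is the standard one. The only cosmetic slip is the phrase ``local homeomorphisms on the target''; you mean on the source, but this does not affect the argument.
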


We shall give a proof of the bounded geodesic image theorem of Masur--Minsky \cite[Theorem 3.1]{MasurMinsky00}. We shall give a bound that depends only on $\delta$, where $\mathcal{C}(S)$ is $\delta$-hyperbolic.

\begin{restate}{Theorem}{bgit}

Given a surface $S$ there exists $M=M(\delta)$ such that whenever $Y$ is a subsurface and $g=(\gamma_i)$ is a geodesic such that $\gamma_i$ cuts $Y$ for all $i$, then $d_Y(g)\leq M$.

\end{restate}

Recently, it has been shown that there exists $\delta$ such that $\mathcal{C}(S)$ is $\delta$-hyperbolic for all surfaces $S$ in Theorem \ref{thm:hyp},  see Aougab \cite{Aougab13}, Bowditch \cite{Bow13}, Clay--Rafi--Schleimer \cite{ClayRafiSchleimer13} and Hensel--Przytycki--Webb \cite{HenselPrzytyckiWebb13}.

\begin{cor}

There exists $M$ independent of the surface $S$ in Theorem \ref{bgit}.

\end{cor}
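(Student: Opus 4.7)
My plan is to argue by contradiction using $\delta$-hyperbolicity of $\mathcal{C}(S)$ applied to a geodesic triangle involving $\partial Y$. Assume $d_Y(g) > M$ for a constant $M=M(\delta)$ to be determined; the goal is to derive a contradiction, which then exhibits $M$ as the theorem's bound. Since consecutive vertices of $g$ are disjoint and both cut $Y$, Lemma~\ref{bp} gives $d_Y(\gamma_i,\gamma_{i+1}) \leq 1$, so the projection $\pi_Y$ varies slowly along $g$, and I may pick indices $a < b$ with $d_Y(\gamma_a,\gamma_b) = d_Y(g)$ and focus on $g|_{[a,b]}$.

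The key step is to introduce an auxiliary vertex: let $\partial$ be a component of $\partial Y$ if $Y$ is non-annular, or the core curve if $Y$ is an annulus. In either case $\partial \in \mathcal{C}_0(S)$ and $\partial$ does not cut $Y$. Form a geodesic triangle with vertices $\gamma_a, \gamma_b, \partial$ and sides $g|_{[a,b]}, h_a, h_b$. Along $h_a$, walking from $\gamma_a$ toward $\partial$, identify the last vertex $w_a$ that cuts $Y$; this exists since $\gamma_a$ cuts $Y$ but $\partial$ does not, and $w_a$ is adjacent in $\mathcal{C}(S)$ to a curve missing $Y$. Every vertex of $h_a|_{[\gamma_a, w_a]}$ cuts $Y$, so iterating Lemma~\ref{bp} yields $d_Y(\gamma_a, w_a) \leq d_S(\gamma_a, w_a)$. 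The corresponding analysis applies to $h_b$ yielding $w_b$ and the analogous bound.

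Now I apply $\delta$-thin triangles: for each $\gamma_k \in g|_{[a,b]}$, there is a vertex $v \in h_a \cup h_b$ with $d_S(\gamma_k, v) \leq \delta$. I split into cases according to whether $v$ lies in the cutting segment $h_a|_{[\gamma_a, w_a]}$ (respectively $h_b|_{[\gamma_b, w_b]}$) or in the non-cutting tail. In the cutting case, a short geodesic from $\gamma_k$ to $v$ combined with iterated Lemma~\ref{bp} bounds $d_Y(\gamma_k, v)$ by $O(\delta)$, subject to handling a possible non-cutting intermediate vertex via a small strengthening of Lemma~\ref{bp}. In the non-cutting tail case, $\gamma_k$ is itself within $O(\delta)$ of $\partial$ in $\mathcal{C}(S)$, and a direct argument bounds $d_Y(\gamma_k)$ using a curve missing $Y$ adjacent to $\gamma_k$. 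Combining the two cases, $d_Y(\gamma_a, \gamma_k)$ and $d_Y(\gamma_k, \gamma_b)$ are each $O(\delta)$, yielding $d_Y(\gamma_a, \gamma_b) \leq O(\delta)$ and the contradiction for $M$ chosen large enough.

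The main obstacle I expect is controlling the projection across the cutting/non-cutting transition. One must handle the case where a short geodesic (of length at most $\delta$) between two cutting vertices passes through a vertex missing $Y$; and separately the case where a vertex of $g$ lies inside $Y$ (hence close to $\partial Y$ in $\mathcal{C}(S)$ despite cutting $Y$). I would address these by a strengthening of Lemma~\ref{bp}, asserting that any curve cutting $Y$ and at bounded $d_S$-distance from a curve missing $Y$ has $d_Y$-projection within a bounded distance of a reference vertex determined by the topology of $\partial Y$. Once this technical strengthening is in place, the hyperbolicity argument sketched above assembles into a bound $M = M(\delta)$ as claimed.
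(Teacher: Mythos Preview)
You are proving the wrong statement. The corollary is not a restatement of Theorem~\ref{bgit}; it is the one-line observation that since Theorem~\ref{bgit} already gives $M=M(\delta)$ and (by the cited results of Aougab, Bowditch, Clay--Rafi--Schleimer, and Hensel--Przytycki--Webb) $\delta$ may be taken independent of $S$, the constant $M$ is likewise independent of $S$. The paper gives no proof beyond citing uniform hyperbolicity immediately before the corollary.

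What you have written is instead an attempted proof of Theorem~\ref{bgit} itself, and there it has a genuine gap. Your triangle argument reduces everything to the ``strengthening of Lemma~\ref{bp}'' you describe at the end: that a curve cutting $Y$ which lies at bounded $d_S$-distance from a curve missing $Y$ has $\pi_Y$-image within bounded $d_Y$-distance of a fixed reference. But this assertion is essentially Lemma~\ref{bg}, which is the entire non-trivial content of the paper's proof. The paper establishes it by building a $(4,0)$-quasigeodesic of $(\partial Y,\beta)$-loops (Lemma~\ref{cirquasi}) and showing such loops have uniformly bounded projection (Lemma~\ref{cirproj}); the surgery construction in Section~2 is exactly what makes this work. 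Your sketch provides no mechanism for this step: iterating Lemma~\ref{bp} along $h_a$ or $h_b$ only gives bounds linear in $d_S$-length, and knowing $d_S(\gamma_k,\partial)\le\delta+1$ does not by itself control $d_Y(\gamma_k,\gamma_a)$. So the hyperbolic-triangle skeleton is the same as the paper's proof of Theorem~\ref{bgit}, but the substantive lemma it rests on is asserted rather than proved.
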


In the last section, we describe markings on $S$ in terms of graphs embedded in $S$ that fill. Given a multicurve $\alpha$, and a curve $\gamma$ that fills with $\alpha$, one can define such a graph $\Gamma_\alpha(\gamma)$. This gives a projection $\Gamma_\alpha$ to a set of markings. Our proof of Theorem \ref{bgit} generalizes to these projections.

\begin{restate}{Theorem}{bm}

Suppose a multicurve $\alpha$ and a geodesic $g=(\gamma_i)$ satisfy $\gamma_i,\alpha$ fill $S$ for each $i$. Then $\textnormal{diam}_{\mathcal{M}_{k,l}(S)}(\Gamma_\alpha(g))\leq M$, where $M$ depends on $S$.

\end{restate}

\section{Loops and surgery}

\subsection{Loops}
Throughout this section, $\alpha$ and $\beta$ are both collections of pairwise disjoint, essential, simple closed curves on $S$ such that $\alpha$ and $\beta$ intersect minimally (equivalent to $\alpha$ and $\beta$ do not share a bigon, see for example \cite[Proposition 1.7]{Farb Margalit}) and $\alpha,\beta$ fill $S$.

We say a collection of simple closed curves $\{\gamma_i\}$ is \textit{sensible} if they are essential, pairwise in minimal position, and with no triple points, i.e. for distinct $i,j,k$, we have $\gamma_i\cap\gamma_j\cap\gamma_k=\emptyset$.

Let $\gamma,\alpha,\beta$ be sensible. Recall that whenever we orient $\gamma$ and $\beta$ arbitrarily, each point $\gamma\cap\beta$ has a sign of intersection $\pm 1$. We say a pair of such points \textit{have opposite sign} if the signs of intersection are non-equal, and \textit{have same sign} otherwise. This notion does not depend on the orientation of $\gamma,\beta$ or $S$.

\begin{defn}\label{circuit}We say that $\gamma$ is an $(\alpha,\beta)$\textit{-loop} if for each arc $b\subset\beta-\alpha$ we have $|\gamma\cap b|\leq 2$ with equality only if $\gamma\cap \beta$ have opposite sign.\end{defn}

Definition \ref{circuit} is inspired by Leasure's $(\alpha\cup\beta)$\textit{-cycles} \cite[Definition 3.1.6]{LeasureThesis}. These cycles allow one to construct quasigeodesics on closed surfaces with a combinatorial description. Definition \ref{circuit} is an adaptation, which allows one to work on punctured surfaces. Both $(\alpha,\beta)$-loops and Leasure's cycles satisfy some variant of Lemma \ref{cirproj}, however cycles a priori require larger constants for Lemma \ref{cirproj} and a more careful proof since they are not necessarily in minimal position with $\alpha$ and $\beta$. Our surgery argument to construct $(\alpha,\beta)$-loops from curves is necessarily more technical, but they will intersect $\alpha$ and $\beta$ minimally.

\subsection{Surgery}

Suppose that $\gamma,\alpha,\beta$ are sensible. We shall describe a surgery process on $\gamma$ to construct an $(\alpha,\beta)$-loop which will be written $\gamma'$. If $\gamma$ is an $(\alpha,\beta)$-loop then we set $\gamma'=\gamma$. If $\gamma$ is not an $(\alpha,\beta)$-loop then let $c$ be a minimal (with respect to inclusion) connected subarc $c\subset\gamma$ such that there exists an arc $b\subset \beta-\alpha$ with either

\begin{itemize}
\item $c\cap b$ is a pair of points with same sign
\item $c\cap b$ has cardinality at least 3
\end{itemize}

Since $c$ is minimal we have that $c$ has endpoints on $b$, $b$ is the unique arc with properties described above, and $|c\cap b|\leq 3$. Thus, each arc $b'\subset\beta-\alpha$ such that $b'\neq b$, we have $|c\cap b'|\leq 2$ with equality only if $c\cap b'$ have opposite sign.

In what follows, we write $N=N(\beta)$ to denote a closed regular neighbourhood of $\beta$. We now describe how to construct $\gamma'$, in each case of how $c$ intersects $b$.

\begin{figure}
\executeiffilenewer{samesign.svg}{samesign.pdf}%
{inkscape -z -D --file=samesign.svg %
--export-pdf=samesign.pdf --export-latex}%
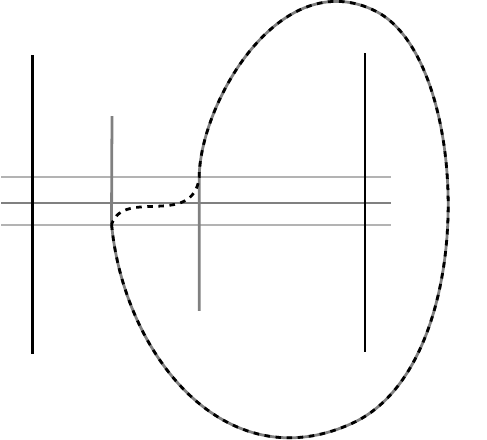%

\caption{The curve $\gamma'$ is dotted.}
\label{fig:samesign}
\end{figure}

\begin{itemize}

\item [Case 1:] $|c\cap b|=2$ and $c\cap b$ have same sign. See Figure \ref{fig:samesign}. Write $R\subset N-\alpha$ to denote the rectangle with $b\subset R$. Let $\{p_1,p_2\}=c\cap\partial R$. Connect $p_1$ to $p_2$ by an arc $a\subset R$ that intersects $b$ once and intersects $c$ only at the endpoints of $a$. We let $\gamma'$ be the simple closed curve $a\cup (c-R)$.

\begin{figure}
\executeiffilenewer{other.svg}{other.pdf}%
{inkscape -z -D --file=other.svg %
--export-pdf=other.pdf --export-latex}%
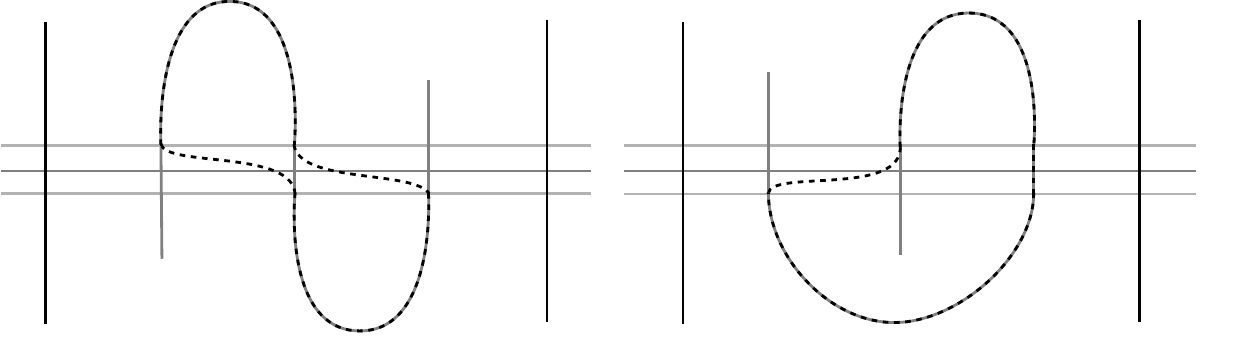%

\caption{Surgery in Case 2 on the left, and Case 3 on the right.}
\label{fig:other}
\end{figure}

\item [Case 2:] $|c \cap b|=3$ with alternating signs of intersection with respect to some order on $b$. See Figure \ref{fig:other}. Let $p_1,p_2,p_3$ be the points $c\cap b$ in some order along $b$. Let $c_1,c_2\subset c$ be arcs such that $c_1\cup c_2=c$, $\partial c_1=\{p_1,p_2\}$ and $\partial c_2=\{p_2,p_3\}$. Connect $c_1\cap\partial R$ to $c_2\cap\partial R$ by two disjoint arcs $a_1,a_2\subset R$ so that $a_1$ intersects $c_1,c_2$ only at its endpoints and intersects $b$ once, and similarly $a_2$. We let $\gamma'=a_1\cup (c_1-R)\cup a_2 \cup (c_2-R)$.

\item [Case 3:] $|c\cap b|=3$ with non-alternating signs of intersection. See Figure \ref{fig:other}. We define $\gamma'$ in a similar fashion as Case 1.

\end{itemize}

\begin{lem}
\label{minpos}

In each case above, $\gamma'$ is in minimal position with $\beta$ and with $\alpha$. Furthermore, $\gamma'$ is essential and an $(\alpha,\beta)$-loop.

\end{lem}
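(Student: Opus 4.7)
The plan is to verify the four claims --- minimal position with $\alpha$ and with $\beta$, essentiality, and the $(\alpha,\beta)$-loop condition --- case by case, treating Case 1 as the prototype. The key structural observation is that the surgery takes place entirely inside the disk $R\subset N(\beta)-\alpha$, so the added arc(s) --- $a$ in Cases 1 and 3, $a_1\cup a_2$ in Case 2 --- are disjoint from $\alpha$ and meet $\beta$ only in $b$.

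For minimal position with $\alpha$, I apply the bigon criterion. Let $D$ be a candidate innermost bigon between $\gamma'$ and $\alpha$. The $\gamma'$-side of $D$ is either disjoint from every added arc --- in which case it lies in $c-R\subset\gamma$ and $D$ gives a bigon of $\gamma$ with $\alpha$, contradicting the sensibility of $\gamma,\alpha$ --- or, since each added arc has its endpoints off $\alpha$, contains every added arc it meets in its entirety. In the latter situation I homotope each added arc rel its endpoints through the disk $R$ to the arc obtained by concatenating the two sub-arcs of $c\cap R$ with the segment of $b$ between them, then push the $b$-segment slightly off $\beta$; the bigon persists and its $\gamma'$-side now lies in $\gamma$, giving a bigon of $\gamma$ with $\alpha$, a contradiction. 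The same rel-endpoint homotopy inside $R$ shows that any bigon between $\gamma'$ and $\beta$ descends to a bigon between $\gamma$ and $\beta$, giving minimal position with $\beta$.

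Essentiality of $\gamma'$ follows from $\gamma'\cap\beta\neq\emptyset$ and the filling hypothesis: every complementary region of $\alpha\cup\beta$ in $S$ is a disk or once-punctured disk, so if $\gamma'$ bounded a disk or once-punctured disk $D'$, a standard innermost disk argument --- using that the components of $\alpha$ and $\beta$ are essential and hence that no simple closed curve in $\alpha\cup\beta$ encloses a single puncture --- produces a sub-arc of $(\alpha\cup\beta)\cap D'$ cutting off a true disk in $D'$, co-bounding a bigon with $\gamma'$ that contradicts the minimal positions just established. For the $(\alpha,\beta)$-loop condition, I check $\gamma'\cap b'$ for each arc $b'\subset\beta-\alpha$: for $b'=b$, the surgery is designed so that $|\gamma'\cap b|=1$ in Cases 1 and 3 and $|\gamma'\cap b|=2$ with opposite signs in Case 2; for $b'\neq b$ the added arcs miss $b'$, so $\gamma'\cap b'=(c-R)\cap b'\subseteq c\cap b'$, and the sentence following the definition of $c$ already gives $|c\cap b'|\leq 2$ with opposite signs in case of equality.

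The main obstacle is the bigon analysis: turning an alleged bigon of $\gamma'$ into one of $\gamma$ requires a close inspection of the local picture in $R$, using the sign and configuration hypothesis of each case, along with a verification that after homotoping the added arcs and pushing off $\beta$ the resulting figure is a genuinely embedded bigon on the $\gamma$-side, without spurious self-intersections.
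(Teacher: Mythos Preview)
Your homotopy idea for the bigon-with-$\alpha$ argument has a real gap. After you homotope the added arc $a$ rel $\{p_1,p_2\}$ inside $R$ to the concatenation of the two sub-arcs of $c\cap R$ with the segment of $b$ between them, the resulting closed curve is $c\cup(b\text{-segment})$, not $\gamma$. Pushing the $b$-segment off $\beta$ does not put it into $\gamma$: the curve $\gamma$ continues past the endpoints $e_1,e_2$ of $c$ along $\gamma-c$, which has nothing to do with $b$. So the $\gamma'$-side of your homotoped bigon still contains an arc (the pushed $b$-segment) that does not lie in $\gamma$, and you have not produced a bigon between $\gamma$ and $\alpha$. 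The same objection applies to your minimal-position-with-$\beta$ argument, and then to your essentiality argument (which relies on both minimal positions).

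The paper avoids this by arguing differently. For $\beta$ it observes directly that every arc of $\gamma'-\beta$ is isotopic in $S-\beta$ to an arc of $\gamma-\beta$ (the only new arcs lie in a half-rectangle of $R-b$ and slide to the corresponding arcs of $c$); this already gives essentiality. For $\alpha$, the paper does not try to turn the bigon side into an arc of $\gamma$. Instead, having fixed an innermost bigon $B$ with $a\subset\partial B$, it uses the just-established minimal position with $\beta$ to deduce that the single point $a\cap b$ forces one endpoint of $b$ onto the $\alpha$-side of $\partial B$. It then looks at the arc $c_\alpha\subset\gamma-\alpha$ through the point $p\in B\cap c\cap b$ and shows $c_\alpha$ is trapped in the disk $B\cup T$ (with $T$ the small triangle between $\gamma,\gamma',\beta$), forcing a bigon between $\gamma$ and $\alpha$. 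This trapping argument is exactly the extra geometric input your homotopy shortcut is missing.
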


\begin{proof}

Any arc of $\gamma'-\beta$ is isotopic in $S-\beta$ to some arc of $\gamma-\beta$. Therefore, $\gamma'$ and $\beta$ cannot share a bigon since $\gamma$ and $\beta$ do not, thus $\gamma'$ is essential. We now show that $\gamma'$ and $\alpha$ do not share a bigon in all the cases of the surgery process described above, via contradiction.

\begin{itemize}

\item [Case 1:] See Figure \ref{fig:bigons}. Pick an innermost bigon $B$ between the pair $\gamma',\alpha$. We must have the arc $a\subset \partial B$, otherwise $\gamma$ and $\alpha$ share a bigon. We have $a\cap b\neq\emptyset$, and since $\gamma'$ and $\beta$ do not share a bigon, we must have one endpoint of $b$ in $\partial B$. Let $\{p\}=B\cap c\cap b$, and $c_\alpha\subset \gamma-\alpha$ be the arc with $p\in c_\alpha$. Now $\gamma$ and $\beta$ do not share a bigon, and neither does $\gamma$ intersect itself, thus $c_\alpha$ is contained with the disc $B\cup T$, where $T$ is the triangle region adjacent to $B$ cobounded by $\gamma',\gamma$ and $\beta$. We conclude that $c_\alpha$ and $\alpha$ cobound a bigon, contradicting $\gamma$ and $\alpha$ do not share a bigon.

\item [Case 2:] It suffices to show each connected component of $S-(\gamma'\cup\alpha)$ adjacent to at least one of the arcs $ a_1,a_2$ is not a bigon. We start with the component containing $p_2$: if this is a bigon $B$, then the arc $b'\subset b-\gamma'$ with $p_2\in b'$ satisfies $b'\subset B$ hence $b'$ cobounds a bigon with $\gamma'$. This contradicts $\gamma'$ and $\beta$ do not share a bigon. Now we argue that the component containing $p_1$ is not a bigon (and similarly $p_3$). Suppose this component was a bigon $B$, first suppose that $a_1\subset\partial B$ but $a_2\cap \partial B=\emptyset$, then one can follow a similar argument as in Case 1. If $a_1,a_2\subset \partial B$, then see Figure \ref{fig:bigons} on the right. A similar argument again can be given as in Case 1.

\item [Case 3:] One can argue similarly to that of Case 1.\qedhere \end{itemize}\end{proof}

\begin{figure}
\executeiffilenewer{bigons.svg}{bigons.pdf}%
{inkscape -z -D --file=bigons.svg %
--export-pdf=bigons.pdf --export-latex}%
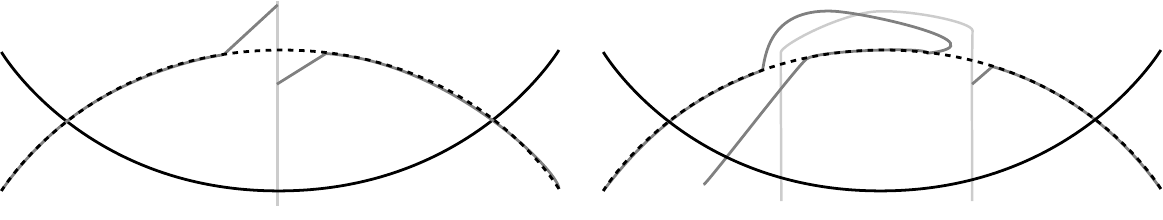%

\caption{The argument in Lemma \ref{minpos}}
\label{fig:bigons}
\end{figure}

Our Lemma \ref{cirpath} is the generalization of \cite[Proposition 3.1.7]{LeasureThesis}.

\begin{lem}
\label{cirpath}

Suppose $\gamma_1,\gamma_2,\alpha,\beta$ are sensible and $\gamma_1$ misses $\gamma_2$. Then the $(\alpha,\beta)$-loops $\gamma_1',\gamma_2'$ constructed by the surgery method above satisfy $i(  \gamma_1' , \gamma_2'  )\leq 4$. Furthermore, if $\gamma_1$ misses $\alpha$, or $\beta$, then $\gamma_1'$ misses $\alpha$, or $\beta$, respectively. \hfill $\square$

\end{lem}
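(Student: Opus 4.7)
I would prove the two assertions separately. The ``furthermore'' clauses follow from the locality of the surgery: each replacement arc lies inside a rectangle $R \subset N(\beta) - \alpha$. Thus, if $\gamma_1$ is disjoint from $\alpha$, both the new arcs and the unchanged portions of $\gamma_1$ remain disjoint from $\alpha$, so $\gamma_1'$ misses $\alpha$. If $\gamma_1$ is disjoint from $\beta$, then $|\gamma_1 \cap b| = 0$ for every $b \subset \beta - \alpha$, so $\gamma_1$ is already an $(\alpha,\beta)$-loop and the surgery procedure returns $\gamma_1' = \gamma_1$, still missing $\beta$.

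For the bound $i(\gamma_1', \gamma_2') \leq 4$, the key observation is that since $\gamma_1 \cap \gamma_2 = \emptyset$, every intersection of $\gamma_1'$ with $\gamma_2'$ must involve at least one surgery arc. When performing a surgery step on $\gamma_1$, I would isotope the replacement arc $a$ (or the arcs $a_1, a_2$) inside the rectangle $R$ so that it runs close to the union of the removed subarc $c$ with the segment $b_0 \subset b$ bounded by the extreme endpoints of $c \cap b$. Because $c$ is disjoint from $\gamma_2$ and $a$ is pushed tight to $c \cup b_0$, the new intersections of $a$ with $\gamma_2'$ can be localised to a neighbourhood of $b_0$, where they are controlled by the loop condition $|\gamma_2' \cap b_0| \leq |\gamma_2' \cap b| \leq 2$. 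Symmetrically, each surgery step on $\gamma_2$ contributes at most $2$ intersections with $\gamma_1'$ near the corresponding $b$-segment.

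Summing the two contributions yields the total bound $i(\gamma_1', \gamma_2') \leq 4$, where the opposite-sign condition in the loop definition is used to rule out additional crossings coming from cap-like arcs on the same side of $b$. The main obstacle I anticipate is the bookkeeping when multiple surgery steps affect overlapping rectangles or when the routings for $\gamma_1$ and $\gamma_2$ conflict. I would handle this by a simultaneous induction on the total number of surgery steps, using Lemma~\ref{minpos} to guarantee that the intermediate curves remain in minimal position with $\alpha$ and $\beta$ (hence without bigons that could hide uncounted crossings), which restricts the accumulation of new intersections to the controlled local picture above.
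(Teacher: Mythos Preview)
The paper gives no proof for this lemma; it is marked with a box and left to the reader as a routine check from the construction. Your outline captures the right ideas, and the ``furthermore'' clauses are handled correctly by the locality observations you give.

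For the intersection bound, your localisation argument is also the natural one, but there is one simplification you are missing: by Lemma~\ref{minpos}, the surgery is a \emph{single} step --- after one application $\gamma'$ is already an $(\alpha,\beta)$-loop --- so the ``simultaneous induction on the total number of surgery steps'' you anticipate is unnecessary. Once you use this, the bookkeeping collapses: $\gamma_1'$ coincides with a subarc of $\gamma_1$ outside a single rectangle $R_1\subset N(\beta)-\alpha$, and likewise $\gamma_2'$ outside a single rectangle $R_2$, so every point of $\gamma_1'\cap\gamma_2'$ lies in $R_1\cup R_2$. In each rectangle the (at most two) surgery arcs may be drawn so that any transverse strand of the other curve meets their union at most once --- in Case~2 the two arcs $a_1,a_2$ can be taken to occupy disjoint sub-segments of $b$ --- and the loop condition $|\gamma_j'\cap b|\le 2$ then gives at most two intersections per rectangle, hence at most four in total. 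This is the one place where your sketch is a little loose: bounding each surgery arc separately by $|\gamma_2'\cap b|\le 2$ would give $4$ per rectangle in Case~2, which is too much; you need the observation that the two surgery arcs together behave like a single push-off of $b_0$.
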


\begin{lem}
\label{cirquasi}

Let $\alpha'$ be a component of a multicurve $\alpha$ on $S$ and let $\beta$ be a curve on $S$. Suppose $\alpha',\beta$ fill $S$. Then there exists a $(4,0)$-quasigeodesic $\alpha'=\gamma_0,\gamma_1,...,\gamma_n=\beta$ with $\gamma_i$ a $(\alpha,\beta)$-loop for every $0<i<n$.

\end{lem}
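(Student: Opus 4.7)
The plan is to take a geodesic $\alpha'=\delta_0,\delta_1,\ldots,\delta_n=\beta$ in $\mathcal{C}(S)$ realizing $n = d_S(\alpha',\beta)$, isotope so that $\alpha,\beta$ together with each of $\delta_1,\ldots,\delta_{n-1}$ are pairwise sensible, and apply the surgery of Section 2.2 to each interior vertex $\delta_i$ ($0<i<n$), obtaining an $(\alpha,\beta)$-loop $\gamma_i := \delta_i'$. Finally I set $\gamma_0 := \alpha'$ and $\gamma_n := \beta$; Lemma \ref{minpos} supplies the loop condition for the interior vertices.

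The core calculation is to bound consecutive curve-graph distances by $4$. For $0 < i < n-1$, Lemma \ref{cirpath} applied to $\delta_i,\delta_{i+1}$ (which miss each other) gives $i(\gamma_i,\gamma_{i+1})\leq 4$; the standard surgery argument in $\mathcal{C}(S)$---find a curve missing one and intersecting the other at most $k/2$ times, then iterate---converts $i(x,y)\leq 4$ into $d_S(x,y)\leq 4$. For the right-hand endpoint, the ``furthermore'' clause of Lemma \ref{cirpath} forces $\gamma_{n-1}=\delta_{n-1}'$ to miss $\beta$, since $\delta_{n-1}$ does, so $d_S(\gamma_{n-1},\gamma_n)\leq 1$. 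For the left endpoint, I observe that every new arc produced by the surgery lies in a rectangle $R\subset N(\beta)\setminus\alpha$, hence is disjoint from $\alpha\supset\alpha'$, while the retained sub-arcs of $\delta_1$ miss $\alpha'$ by hypothesis; together these force $\gamma_1$ to miss $\alpha'=\gamma_0$, giving $d_S(\gamma_0,\gamma_1)\leq 1$. Telescoping yields the upper half of the quasigeodesic inequality, $d_S(\gamma_i,\gamma_j)\leq 4|i-j|$.

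For the lower bound $d_S(\gamma_i,\gamma_j)\geq |i-j|/4$, the plan is to show that each $\gamma_i$ stays uniformly close to $\delta_i$ in $\mathcal{C}(S)$: since $\gamma_i=\delta_i'$ is obtained from $\delta_i$ by modifications confined to a rectangle $R\subset N(\beta)$, the portion of $\delta_i$ outside $N(\beta)$ is preserved in $\gamma_i$, and one should be able to build a curve disjoint from both $\delta_i$ and $\gamma_i$ using this shared portion together with a careful local choice near $\beta$. Granted a uniform bound $d_S(\gamma_i,\delta_i)\leq C$, the equality $d_S(\delta_i,\delta_j)=|i-j|$ together with the triangle inequality gives $d_S(\gamma_i,\gamma_j)\geq |i-j|-2C$, which implies the required lower bound once $|i-j|$ exceeds a fixed threshold; the finitely many small-index cases reduce to checking $\gamma_i\neq\gamma_j$, which is forced by the bounded intersection numbers from Lemma \ref{cirpath}. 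I expect the main obstacle to be precisely this uniform bound $d_S(\gamma_i,\delta_i)\leq C$: in Case 1 of the surgery, the new arc $a$ can cross parts of $\delta_i\setminus c$ that pass through $R$, so one has to choose the surgery arcs more carefully, or perform a small auxiliary isotopy, to exhibit a common disjoint curve.
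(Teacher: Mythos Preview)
Your upper-bound argument matches the paper's: surger the interior vertices of a geodesic, use Lemma~\ref{cirpath} to get $i(\gamma_i,\gamma_{i+1})\le 4$, hence $d_S(\gamma_i,\gamma_{i+1})\le 4$, and telescope. The endpoint checks are fine (and indeed the ``furthermore'' clause of Lemma~\ref{cirpath} handles both ends at once).

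The gap is entirely in the lower bound. Your plan hinges on a uniform estimate $d_S(\gamma_i,\delta_i)\le C$, but there is no reason to expect one: in Case~1 of the surgery the output is $\gamma'=a\cup(c-R)$, where $c$ is only a \emph{subarc} of $\gamma$ and the rest of $\gamma$ is discarded. So $\gamma'$ need not share any arc with $\gamma$ outside $N(\beta)$, contrary to what you assert, and there is no obvious common disjoint curve. Even granting such a $C$, the conclusion $d_S(\gamma_i,\gamma_j)\ge |i-j|-2C$ is a $(1,2C)$-lower bound, not the $(4,0)$-lower bound claimed; your ``small-index'' patch via $\gamma_i\neq\gamma_j$ is also unjustified, since nothing prevents two different $\delta_i$ from surgering to the same loop.

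The paper avoids this entirely by an iterative shortening trick. After the first pass you have a path of $(\alpha,\beta)$-loops with consecutive distances $\le 4$. If some pair violates $|i-j|\le d_S(\gamma_i,\gamma_j)$, replace that subsegment by a geodesic between $\gamma_i$ and $\gamma_j$ and surger its interior vertices (the endpoints are already loops, so Lemma~\ref{cirpath} still bounds the seam intersections by $4$). The total length strictly drops, so the process terminates; at termination $|i-j|\le d_S(\gamma_i,\gamma_j)$ holds for all pairs, and together with the upper bound you have a $(4,0)$-quasigeodesic of $(\alpha,\beta)$-loops. This bypasses any need to compare $\gamma_i$ with the original $\delta_i$.
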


\begin{proof}

Start with a geodesic $\gamma_0,...,\gamma_m$ of curves from $\alpha'$ to $\beta$, such that this collection of curves is sensible. Using the surgery process on each $\gamma_i$ with $1\leq i \leq n-1$, we obtain a sequence $\gamma_1',...,\gamma_{m-1}'$ of $(\alpha,\beta)$-loops. We have $i(\gamma_i',\gamma_{i+1}')\leq 4$ for each $i$ by Lemma \ref{cirpath}, therefore $d_S(\gamma_i',\gamma_{i+1}')\leq 4$ and $d_S(\gamma_i',\gamma_j')\leq 4|i-j|$ for each $i,j$. If for some $i>j$ we have $i-j > d_S(\gamma_i',\gamma_j')$, then we connect $\gamma_i'$ and $\gamma_j'$ with a geodesic and surger each vertex of it using $\alpha,\beta$ again. Repeating this process, we obtain the required quasigeodesic of $(\alpha,\beta)$-loops.\end{proof}

We remark that if $S\neq S_{1.2}$ then in Lemma \ref{cirquasi} we can take a $(3,0)$-quasigeodesic, and for all but finitely many surfaces we can take a $(2,0)$-quasigeodesic.

\begin{lem}
\label{cirproj}
Let $Y$ be a subsurface of $S$ and suppose $\partial Y$ and $\beta$ fill $S$. Let $\gamma$ be a $(\partial Y, \beta)$-loop that cuts $\partial Y$. Then $d_Y(\gamma,\beta)\leq 2$ if $Y$ is non-annular and $d_Y(\gamma,\beta)\leq 5$ otherwise.

\end{lem}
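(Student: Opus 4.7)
The plan is to pick a generic pair of arcs $c \in \pi_Y(\gamma)$ and $b \in \pi_Y(\beta)$, bound their geometric intersection using Definition \ref{circuit}, and then convert that intersection bound into a distance bound in $\mathcal{AC}(Y)$. Since each of $\pi_Y(\gamma)$ and $\pi_Y(\beta)$ consists of pairwise disjoint subarcs of the simple closed curves $\gamma$ and $\beta$, each set has diameter at most $1$ in $\mathcal{AC}(Y)$, so the diameter of the union $\pi_Y(\gamma)\cup\pi_Y(\beta)$ is controlled by the maximum distance $d_{\mathcal{AC}(Y)}(c,b)$ taken over such pairs.

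For non-annular $Y$, every $b\in\pi_Y(\beta)$ is a component of $\beta\cap Y$, hence is an arc of $\beta-\partial Y$ contained in $Y$, and the loop condition then yields $|\gamma\cap b|\leq 2$, so a fortiori $|c\cap b|\leq 2$ for any $c\in\pi_Y(\gamma)$. I would exhibit an essential non-peripheral arc or curve $a\in\mathcal{AC}_0(Y)$ disjoint from both $c$ and $b$; then $d_{\mathcal{AC}(Y)}(c,b)\leq 2$ by the triangle inequality. To produce $a$, I would look at the frontier in $Y$ of a regular neighbourhood $N(c\cup b\cup \partial Y)$: a short Euler characteristic computation using $|c\cap b|\leq 2$ and the sign restriction when equality holds shows that this frontier is not entirely made of boundary-parallel pieces, and since we have excluded $Y\cong S_{0,3}$, at least one of its components furnishes the required $a$.

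For annular $Y$, pass to the annular cover $S_Y$ and fix arcs $\tilde c\in\pi_Y(\gamma)$ and $\tilde b\in\pi_Y(\beta)$ crossing between the two boundary circles of the compactified $S_Y$. Each intersection point of $\tilde c$ and $\tilde b$ projects to an intersection $p\in\gamma\cap\beta$ lying on some arc $b'\subset\beta-\partial Y$ downstairs, and the loop condition caps each such $b'$'s contribution at $2$. A direct bookkeeping argument, using that only a bounded number of arcs $b'$ appear in the image of the segment of $\tilde b$ sitting near $Y'$, yields $|\tilde c\cap \tilde b|\leq 4$. The standard estimate $d_{\mathcal{AC}(Y)}(\tilde c,\tilde b)\leq i(\tilde c,\tilde b)+1$ for an annular arc complex (Masur--Minsky) then gives $d_Y(\gamma,\beta)\leq 5$.

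The main technical obstacle is the production of the disjoint vertex $a$ in the non-annular case. This splits into subcases according to the value of $|c\cap b|\in\{0,1,2\}$, whether the endpoints of $c$ and $b$ lie on the same or distinct components of $\partial Y$, and, when $|c\cap b|=2$, how the opposite-sign condition of Definition \ref{circuit} constrains the local combinatorics near each intersection point so that the complement of $N(c\cup b\cup\partial Y)$ in $Y$ cannot be exhausted by discs and once-punctured discs parallel to $\partial Y$.
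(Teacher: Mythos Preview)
Your non-annular argument matches the paper's exactly: bound $|c\cap b|\le 2$ from the loop condition and produce a common neighbour via the boundary of a regular neighbourhood. The paper states this in one line; your case split and Euler-characteristic remarks are just a fleshing-out of that line.

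In the annular case your overall strategy also agrees with the paper's (bound $i(\tilde c,\tilde b)$ and use $d_{\mathcal{AC}(Y)}=i+1$), but your ``direct bookkeeping'' hides the one genuine difficulty and, as written, does not go through. The sentence ``each intersection point of $\tilde c$ and $\tilde b$ projects to some $p\in\gamma\cap\beta$ on an arc $b'\subset\beta-\partial Y$, and the loop condition caps each such $b'$'s contribution at $2$'' is not a valid count: the cover $p_Y$ is infinite-to-one, so a single point of $\gamma\cap\beta$ (hence a single arc $b'$) can account for arbitrarily many points of $\tilde c\cap\tilde b$ in $S_Y$. Likewise, ``only a bounded number of arcs $b'$ appear in the image of the segment of $\tilde b$ sitting near $Y'$'' says nothing about intersections of $\tilde c$ and $\tilde b$ occurring far from $Y'$, and gives no mechanism to get the specific bound $4$.

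What the paper actually does is isolate exactly this issue. One first isotopes the triangles cobounded by $\partial Y,\beta,\gamma$ into $Y$ (a manoeuvre from \cite[Section~10]{MasurSchleimer13}); this keeps everything in minimal position and forces all but at most two of the points of $\tilde c\cap\tilde b$ to lie in the homeomorphic lift $Y'$. On $Y'$ the projection $p_Y$ is a homeomorphism, $\tilde b\cap Y'$ is a single arc projecting to a single arc $b\subset\beta-\partial Y$, and the points of $\tilde c\cap\tilde b\cap Y'$ project bijectively to points of $\gamma\cap b$; now the loop condition really does give at most two such points (and if $\ge 3$ then two would share a sign). Combining the two estimates yields $i(\tilde c,\tilde b)\le 4$, hence $d_Y(\gamma,\beta)\le 5$. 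So the step you are missing is precisely the ``outside $Y'$ there are at most two intersections'' input; once you supply that (or cite it), your argument and the paper's coincide.
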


\begin{proof}

If $Y$ is non-annular then any pair of arcs in the projection will intersect at most twice by Definition \ref{circuit}, so one can consider a closed regular neighbourhood of the arcs to prove the required bound on distance.

If $Y$ is annular then suppose for contradiction that $d_Y(\gamma,\beta)\geq 6$. Then there exist arcs $\delta^*\in \pi_Y(\gamma)$ and $\epsilon^*\in\pi_Y(\beta)$ with $|\delta^*\cap \epsilon^*|\geq 5$. Following a claim from \cite[Section 10]{MasurSchleimer13}, if we isotope the triangles cobounded by $\partial Y, \beta,\gamma$ into $Y$ (this retains minimal position), we have that $|\delta^*\cap\epsilon^*\cap Y'| \geq 3$, where $Y'$ is the homeomorphic lift of $Y$. Therefore there exists an arc of $\beta-\partial Y$ which intersects $\gamma$ at least two times with the same sign, contradicting $\gamma$ a $(\partial Y,\beta)$-loop.\end{proof}

\section{The proof}

Let $\gamma$ be a curve and $P$ be a set of curves. We say  $\gamma$ is $\epsilon$\textit{-close} to $P$ if for some curve $\beta$ of $P$ we have $d_S(\gamma,\beta)\leq \epsilon$. Throughout this section, $\delta$ is a constant such that $\mathcal{C}(S)$ is $\delta$-hyperbolic, see Theorem \ref{thm:hyp}.

\begin{lem}
\label{bg}
There exists $D=D(\delta)$ such that for any subsurface $Y$, component $\alpha\subset\partial Y$,  and geodesic  $\alpha=\gamma_0,\gamma_1,...,\gamma_n=\beta$ with $n\geq 3$, we have $d_Y(\gamma_i,\beta)\leq D$ whenever $i\geq 2$.
\end{lem}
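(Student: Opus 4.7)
My plan is to combine Lemma \ref{cirquasi}, Lemma \ref{cirproj}, and stability of quasigeodesics in the $\delta$-hyperbolic graph $\mathcal{C}(S)$. Since $d_S(\alpha,\beta)=n\geq 3$, the curves $\alpha$ and $\beta$ fill $S$, so Lemma \ref{cirquasi} applied to the multicurve $\partial Y$ with distinguished component $\alpha$ produces a $(4,0)$-quasigeodesic
$$\alpha=\gamma_0',\gamma_1',\ldots,\gamma_m'=\beta$$
whose interior vertices are $(\partial Y,\beta)$-loops. By Lemma \ref{cirproj}, every such loop $\gamma_j'$ that cuts $\partial Y$ satisfies $d_Y(\gamma_j',\beta)\leq 5$. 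The Morse lemma for $\delta$-hyperbolic spaces then provides $K=K(\delta)$ such that $(\gamma_i)$ and $(\gamma_j')$ lie at Hausdorff distance at most $K$.

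The main case will be $i>K+1$. I would pick $\gamma_{j(i)}'$ with $d_S(\gamma_i,\gamma_{j(i)}')\leq K$ and fix a $\mathcal{C}(S)$-geodesic $\gamma_i=\sigma_0,\sigma_1,\ldots,\sigma_L=\gamma_{j(i)}'$ of length $L\leq K$. The key observation is that if some $\sigma_\ell$ were disjoint from $\partial Y$, then in particular it would be disjoint from $\alpha$, giving $d_S(\sigma_\ell,\alpha)\leq 1$; the triangle inequality would then force $i=d_S(\gamma_i,\alpha)\leq d_S(\gamma_i,\sigma_\ell)+1\leq K+1$, contradicting $i>K+1$. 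Hence every $\sigma_\ell$, including $\gamma_{j(i)}'$ itself, cuts $\partial Y$, so iterating Lemma \ref{bp} along the $\sigma_\ell$'s gives $d_Y(\gamma_i,\gamma_{j(i)}')\leq K$, and combining with Lemma \ref{cirproj} yields $d_Y(\gamma_i,\beta)\leq K+5$.

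For the remaining range $2\leq i\leq K+1$ I would fall back on Lemma \ref{bp} along the tail of the original geodesic. Each $\gamma_\ell$ with $\ell\geq 2$ has $d_S(\gamma_\ell,\alpha)\geq 2$ and so cuts $\partial Y$, making $d_Y$ one-Lipschitz along this tail. If $n\geq K+2$, jumping from $\gamma_i$ to $\gamma_{K+2}$ costs at most $K$ in $d_Y$, and the previous case bounds $d_Y(\gamma_{K+2},\beta)\leq K+5$, so $d_Y(\gamma_i,\beta)\leq 2K+5$; if $n<K+2$ the tail from $\gamma_i$ to $\beta$ has length less than $K$, so Lemma \ref{bp} yields $d_Y(\gamma_i,\beta)\leq K$ directly. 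Setting $D=2K(\delta)+5$ then handles every $i\geq 2$.

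The main obstacle is the middle step: transferring the bound $d_S(\gamma_i,\gamma_{j(i)}')\leq K$ supplied by hyperbolicity into a bound on $d_Y(\gamma_i,\gamma_{j(i)}')$. Lemma \ref{bp} only controls $d_Y$ across $\mathcal{C}(S)$-edges whose endpoints both cut $\partial Y$, and in general a short $\mathcal{C}(S)$-geodesic may waste its length on curves in the $1$-neighbourhood of $\partial Y$ that fail to cut it. The hypothesis $i\geq 2$ together with $\alpha\subset\partial Y$ is precisely what rules this out once $i>K+1$, and this is why the small-$i$ range must be handled separately.
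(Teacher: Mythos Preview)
Your argument is correct and follows the same route as the paper: build the $(4,0)$-quasigeodesic of $(\partial Y,\beta)$-loops via Lemma~\ref{cirquasi}, invoke the Morse stability constant (the paper's $D'$, your $K$), then combine Lemma~\ref{bp} along a short connecting geodesic with Lemma~\ref{cirproj}; the paper's final bound $D=2D'+B$ is exactly your $2K+5$. You have simply made explicit what the paper compresses into the sentence ``Using Lemma~\ref{bp}, we can take $D=2D'+B$'': namely, the verification that every vertex on the short geodesic from $\gamma_i$ to the nearby loop actually cuts $Y$, which forces the threshold $i>K+1$ and the separate treatment of small~$i$ along the tail of~$g$---this case split is precisely where the factor of~$2$ arises in both bounds.
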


\begin{proof}

Use Lemma \ref{cirquasi} to construct a (4,0)-quasigeodesic $Q$ of $(\partial Y,\beta)$-loops from $\alpha$ to $\beta$. For each $i$, we have $\gamma_i$ is $D'$-close to $Q$ where $D'=D'(\delta)$. For an explicit $D'$, we can take $D'=D''+2$, where $D''$ is the largest integer with $D''\leq \delta \lceil \log_2(26 D'')\rceil$. See for example \cite[Chapter III.H]{BridsonHaefliger}.  Using Lemma \ref{bp}, we can take $D=2D'+B$, where $B$ is the bound provided in Lemma \ref{cirproj}.\end{proof}

\begin{thm}
\label{bgit}

Given a surface $S$ there exists $M=M(\delta)$ such that whenever $Y$ is a subsurface and $g=(\gamma_i)$ is a geodesic such that $\gamma_i$ cuts $Y$ for all $i$, then $d_Y(g)\leq M$.

\end{thm}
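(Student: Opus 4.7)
The plan is to use $\delta$-hyperbolicity together with Lemma~\ref{bg} to locate a pivot vertex of $g$ whose $\pi_Y$-image is close in $d_Y$ to both $\pi_Y(\gamma_0)$ and $\pi_Y(\gamma_n)$. Since consecutive vertices of $g$ miss each other and each cut $Y$, Lemma~\ref{bp} gives $d_Y(\gamma_i,\gamma_{i+1})\leq 1$; applied to finite subgeodesics, this reduces the theorem to bounding $d_Y(\gamma_0,\gamma_n)$ for a finite geodesic $(\gamma_0,\dots,\gamma_n)$ whose every vertex cuts $Y$.

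Fix a component $\alpha$ of $\partial Y$ that $\gamma_0$ and $\gamma_n$ intersect, so that $d_S(\gamma_0,\alpha),d_S(\gamma_n,\alpha)\geq 2$. Form the geodesic triangle with sides $g$, $h_0:=[\alpha,\gamma_0]$ and $h_n:=[\alpha,\gamma_n]$; by Theorem~\ref{thm:hyp} it is $\delta$-thin. Lemma~\ref{bg} applied to $h_0$ (resp.\ $h_n$) says that any vertex at $\mathcal{C}(S)$-distance $\geq 2$ from $\alpha$ is within $d_Y$-distance $D$ of $\gamma_0$ (resp.\ $\gamma_n$).

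In the main case I would pick a vertex $\gamma_k$ on $g$ that is $\mathcal{C}(S)$-distance $\leq\delta$ from both $h_0$ and $h_n$; such a vertex exists by the tripod structure of $\delta$-thin triangles, since the set of $\gamma_i$ that are $\delta$-close to $h_0$ is a prefix, the set $\delta$-close to $h_n$ is a suffix, and together they cover $g$. If $d_S(\gamma_k,\alpha)\geq\delta+2$, then the two nearby vertices $\zeta\in h_0$, $\eta\in h_n$ satisfy $d_S(\zeta,\alpha),d_S(\eta,\alpha)\geq 2$, and any $\mathcal{C}(S)$-geodesic of length $\leq\delta$ between $\gamma_k$ and $\zeta$ (or $\eta$) stays at $\mathcal{C}(S)$-distance $\geq 2$ from $\alpha$ and therefore consists of curves that all cut $Y$. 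Lemma~\ref{bp} applied consecutively along this short geodesic gives $d_Y(\gamma_k,\zeta),d_Y(\gamma_k,\eta)\leq\delta$, and combining with the Lemma~\ref{bg} bounds yields $d_Y(\gamma_0,\gamma_n)\leq 2(D+\delta)$.

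The main obstacle is the complementary case in which $g$ passes within $\mathcal{C}(S)$-distance $\delta+2$ of $\alpha$. Here the set $I=\{i : d_S(\gamma_i,\alpha)\leq\delta+2\}$ is an interval in $\{0,\dots,n\}$ of length at most $2(\delta+2)$, because any two vertices in $I$ lie at $\mathcal{C}(S)$-distance $\leq 2(\delta+2)$ and $g$ is a geodesic; Lemma~\ref{bp} then bounds the $d_Y$-diameter across $I$ by the same constant. For indices $i\notin I$ we have $d_S(\gamma_i,\alpha)>\delta+2$, and by the tripod structure $\gamma_i$ is $\delta$-close to a vertex $\zeta\in h_0$ when $i$ lies before $I$ and to a vertex $\eta\in h_n$ when $i$ lies after $I$, so the main-case argument (with $\gamma_i$ in place of $\gamma_k$) still gives $d_Y(\gamma_i,\gamma_0)\leq D+\delta$ or $d_Y(\gamma_i,\gamma_n)\leq D+\delta$. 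Splicing the three resulting bounds via the triangle inequality in $\mathcal{AC}(Y)$ produces a uniform $M=M(\delta)$.
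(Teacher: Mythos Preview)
Your argument is essentially the paper's own proof: build the geodesic triangle with apex $\alpha\subset\partial Y$, use $\delta$-slimness to find a switch point on $g$ between the $h_0$-side and the $h_n$-side, invoke Lemma~\ref{bg} on the two legs and Lemma~\ref{bp} on the short hops, and absorb the portion of $g$ within a $(\delta+O(1))$-ball of $\alpha$ as a bounded-length interval. The paper organises the case split slightly differently (it defines $I=N_{\delta+1}(\alpha)\cap g$ from the outset and takes $M=4\delta+2D+4$), but the mechanism is identical.

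Two small imprecisions worth tightening. First, ``fix a component $\alpha$ of $\partial Y$ that $\gamma_0$ and $\gamma_n$ intersect'' need not be possible: a curve can cut $Y$ while missing $\partial Y$ (e.g.\ a curve contained in $Y$), so $d_S(\gamma_0,\alpha)\geq 2$ may fail. This is harmless---if $d_S(\gamma_0,\alpha)\leq 1$ then $h_0$ is too short for Lemma~\ref{bg} to be needed---but the sentence should be rephrased. Second, $\delta$-slimness does not literally give a single $\gamma_k$ that is $\delta$-close to both $h_0$ and $h_n$, nor are the ``close to $h_0$'' indices literally a prefix; what you actually get (and what the paper uses) is an adjacent pair $\gamma_k,\gamma_{k+1}$ with one $\delta$-close to $h_0$ and the other to $h_n$, which costs an extra $+1$ in the bound. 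With these cosmetic fixes your proof goes through and matches the paper's.
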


\begin{proof}

Take $M=4\delta+2D+4$, where $D$ is defined as in Lemma \ref{bg}. Fix $i<j$. We shall show that $d_Y(\gamma_i,\gamma_j)\leq M$. Fix $\alpha$ a component of $\partial Y$. Let $I=N_{\delta+1}(\alpha)\cap g$. There exists $g'=(\gamma_{i'},...,\gamma_{j'})$ a geodesic of length at most $2\delta+2$ such that $I\subset g' \subset g$.

Let $P$ be a geodesic from $\alpha$ to $\gamma_i$ and $Q$ be a geodesic from $\beta$ to $\gamma_j$. Let $i''=\max \{ i, i'-1 \}$ and $j''=\min \{ j, j'+1 \}$. Since geodesic triangles are $\delta$-slim, we have either $\gamma_{i''}$ is $\delta$-close to $P$ and $\gamma_{j''}$ is $\delta$-close to $Q$, or, there exists adjacent vertices of $g-g'$ with one $\delta$-close to $P$ and the other $\delta$-close to $Q$. By lemmas \ref{bp} and \ref{bg} we have that $d_Y(\gamma_i,\gamma_j)\leq D+\delta+(2\delta+4)+\delta+D=M$.\end{proof}

We remark that $M$ need not be optimal for each surface. For example, for $S_2$ it may be better to consider Leasure's cycles, which give $(2,0)$-quasigeodesics, whereas a priori we are taking $(3,0)$-quasigeodesics in Lemma \ref{cirquasi}. Similar surgery arguments may produce better results for other surfaces. Also, for all but finitely many surfaces, in Lemma \ref{cirquasi} we can take a $(2,0)$ quasigeodesic; this improves on the constant $D$.

\section{Generalization to markings}

We thank Brian Bowditch for suggesting this generalization and set-up. Defining the markings that we wish to discuss has similarities with \cite[Section 6]{MasurMosherSchleimer}.

Given a multicurve $\alpha$ and a curve $\beta$ such that $\alpha,\beta$ fill $S$, let $B$ be a maximal collection of pairwise non-isotopic arcs of $\beta-\alpha$ in $S-\alpha$. We let $\Gamma_\alpha(\beta)$ be the graph embedded in $S$ by taking the union $\alpha\cup B$. This may not be well-defined but there is bounded intersection between two such graphs, in terms of $S$, between any pair of choices of $B$.

Since $\alpha,\beta$ fill $S$, it follows that there are no essential simple closed curves on $S$ that are disjoint from $\Gamma_\alpha(\beta)$, i.e. $\Gamma_\alpha(\beta)$ fills $S$. Furthermore, by an Euler characteristic argument, the number of edges of $\Gamma_\alpha(\beta)$ can be bounded in terms of the surface $S$. Let $k_1$ be this bound.

We write $\mathcal{M}_k(S)$ to denote the set of (isotopy classes of) embedded graphs that fill $S$ with at most $k$ edges. Let $\mathcal{M}_{k,l}(S)$ be the graph with vertex set $\mathcal{M}_k(S)$ with two vertices $G_1,G_2$ adjacent if $i(G_1,G_2)\leq l$. Here, $i(G_1,G_2)=\min|\Gamma_1\cap \Gamma_2|$ where the minimum is taken over representatives $\Gamma_i$ of the isotopy classes $G_i$, where $i=1,2$.

Let $k_2$ be a bound for the number of edges of any \textit{clean complete marking} on $S$ regarded as a graph on $S$, see \cite{MasurMinsky00} for definitions. The graph of clean complete markings on $S$ is connected. Write $l_1=\textnormal{max}_M i(M,M')$, where the maximum is taken for all clean complete markings of $M$, where $M'$ differs from $M$ by an \textit{elementary move}. Let $l_2=\textnormal{max}_G \textnormal{min}_M i(G,M)$, where the minimum is taken over graphs with at most $k=\textnormal{max}(k_1,k_2)$ edges that fill $S$ and the maximum is taken over clean complete markings of $S$.

We then have $\mathcal{M}_{k,l}(S)$ connected, where $l=\textnormal{max}(l_1,l_2)$. Endow the graph $\mathcal{M}_{k,l}(S)$ with a metric where each edge has unit length, and distance is given by shortest paths. Vertex stabilizers are uniformally bounded, by the Alexander method. The \textit{mapping class group} $\mathcal{MCG}(S)$ acts on $\mathcal{M}_{k,l}(S)$, and thus by the Milnor-\v{S}varc Lemma \cite[Proposition I.8.19]{BridsonHaefliger} the mapping class group is quasi-isometric to the \textit{marking graph} $\mathcal{M}_{k,l}(S)$.

\begin{thm}
\label{bm}

Suppose a multicurve $\alpha$ and a geodesic $g=(\gamma_i)$ satisfy $\gamma_i,\alpha$ fill $S$ for each $i$. Then $\textnormal{diam}_{\mathcal{M}_{k,l}(S)}(\Gamma_\alpha(g))\leq M$, where $M$ depends on $S$.

\end{thm}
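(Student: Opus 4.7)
My plan is to run the proof of Theorem \ref{bgit} line-by-line, with the marking projection $\Gamma_\alpha$ in place of the subsurface projection $\pi_Y$ and $d_{\mathcal{M}_{k,l}(S)}$ in place of $d_Y$; since the multicurve $\alpha$ now plays the role of $\partial Y$, the ingredients I must re-prove in this setting are analogs of Lemmas \ref{bp} and \ref{cirproj}, after which Lemma \ref{bg} and the main theorem carry over with essentially no change.

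The pivotal auxiliary fact, and the only step that is genuinely new, is an intersection-implies-distance principle: for each $N$ there is a constant $C=C(N,S)$ such that any two $G_1,G_2\in \mathcal{M}_k(S)$ with $i(G_1,G_2)\leq N$ satisfy $d_{\mathcal{M}_{k,l}(S)}(G_1,G_2)\leq C$. This is a consequence of the Alexander method: pairs of filling graphs on $S$ with at most $k$ edges each and mutual intersection at most $N$ form only finitely many $\mathcal{MCG}(S)$-orbits, so since $\mathcal{M}_{k,l}(S)$ is connected and $\mathcal{MCG}(S)$ acts by graph automorphisms, distances between such pairs are bounded by a function of $N$ and $S$.

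With this in hand, the analog of Lemma \ref{bp} goes as follows: if $\gamma_1$ misses $\gamma_2$ and each fills with $\alpha$, realize the two curves disjointly and the two copies of $\alpha$ as disjoint parallels; then $i(\Gamma_\alpha(\gamma_1),\Gamma_\alpha(\gamma_2))$ is bounded only by endpoints of the arc systems on the parallel copies, hence by $O(k)$, and the key principle gives $d_{\mathcal{M}_{k,l}(S)}(\Gamma_\alpha(\gamma_1),\Gamma_\alpha(\gamma_2))\leq B_1(S)$. For the analog of Lemma \ref{cirproj}, Definition \ref{circuit} forces each arc of $\beta-\alpha$ to meet $\gamma$ at most twice, so each arc of a maximal arc system $B_\gamma\subset\gamma-\alpha$ meets each arc of $B_\beta\subset\beta-\alpha$ at most twice, yielding $i(\Gamma_\alpha(\gamma),\Gamma_\alpha(\beta))=O(k^2)$, hence $d_{\mathcal{M}_{k,l}(S)}(\Gamma_\alpha(\gamma),\Gamma_\alpha(\beta))\leq B_2(S)$.

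Finally I reproduce Lemma \ref{bg} by fixing a component $\alpha'$ of $\alpha$ and, for any geodesic $\alpha'=\gamma_0,\ldots,\gamma_n=\beta$ with $n\geq 3$ (so $\alpha',\beta$ automatically fill), using Lemma \ref{cirquasi} to produce a $(4,0)$-quasigeodesic $Q$ of $(\alpha,\beta)$-loops; $\delta$-hyperbolicity places each $\gamma_i$ with $i\geq 2$ within $D'(\delta)$ of some loop $\gamma^*\in Q$, and combining the two analogs gives $d_{\mathcal{M}_{k,l}(S)}(\Gamma_\alpha(\gamma_i),\Gamma_\alpha(\beta))\leq D'B_1+B_2=:D$. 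The proof of Theorem \ref{bm} itself then copies that of Theorem \ref{bgit} verbatim, decomposing $g$ into a short middle sub-geodesic near $\alpha'$ (bounded by iterating the analog of Lemma \ref{bp}) and two $\delta$-thin triangle sides (bounded by the analog of Lemma \ref{bg}). The main obstacle I foresee is the intersection-implies-distance principle: although it is essentially folklore, one must carefully formalize the intersection number of graphs in $\mathcal{M}_k(S)$ and check that the Alexander method delivers a uniform $C(N,S)$.
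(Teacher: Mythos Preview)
Your proposal is correct and follows essentially the same route as the paper's own sketch: first establish that bounded intersection of filling graphs implies bounded distance in $\mathcal{M}_{k,l}(S)$ (via finiteness of configurations up to the mapping class group), then bound $i(\Gamma_\alpha(\gamma_1),\Gamma_\alpha(\gamma_2))$ for disjoint $\gamma_1,\gamma_2$ and $i(\Gamma_\alpha(\gamma),\Gamma_\alpha(\beta))$ for an $(\alpha,\beta)$-loop $\gamma$, and feed these into the proofs of Lemma~\ref{bg} and Theorem~\ref{bgit}. You have correctly identified the intersection-implies-distance step as the only genuinely new ingredient, which is exactly where the paper places the emphasis as well.
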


\begin{proof}

We sketch a proof for brevity, since most of the proof is a generalization of earlier lemmas. Firstly, if $\Gamma_1$ intersects $\Gamma_2$ boundedly many times, then there are only finitely many possibilities for $\Gamma_2$ in terms of $\Gamma_1$. There are only finitely many possibilities for $\Gamma_1$ modulo homeomorphism. Thus, if intersection between markings is bounded then their distance is bounded.

Secondly, one bounds $i(\Gamma_\alpha(\gamma_1),\Gamma_\alpha(\gamma_2))$ when $\gamma_1$ and $\gamma_2$ are disjoint, in terms of $S$. This generalizes Lemma \ref{bp}. Then one bounds $i(\Gamma_\alpha(\beta),\Gamma_\alpha(\gamma))$ when $\gamma$ is a $(\alpha,\beta)$-loop, in terms of $S$. This generalizes Lemma \ref{cirproj}. Using these lemmas, one can generalize Lemma \ref{bg} then finish the argument analogously to Theorem \ref{bgit}.\end{proof}

\subsection*{Acknowledgements} The author would like to thank Saul Schleimer for thorough comments on the paper. We thank Brian Bowditch, Saul Schleimer and Robert Tang for interesting conversations.

\bibliography{referencesbgit}
\bibliographystyle{amsalpha}

\end{document}